\documentclass[letterpaper, 10 pt, conference]{ieeeconf}  
\usepackage{amsfonts}
\usepackage{amsmath}
\usepackage{ntheorem}
\usepackage[T1]{fontenc}
\usepackage[utf8]{inputenc}
\usepackage{xcolor}
\usepackage{bm}
\newtheorem{theorem}{Theorem}
\newtheorem{lemma}[theorem]{Lemma}

\newtheorem{remark}[theorem]{Remark}

\IEEEoverridecommandlockouts                              

\usepackage{graphics} 
\usepackage{epsfig} 
\usepackage{amsmath} 
\usepackage{amssymb}  

\title{\LARGE \bf  Delay is Necessary for a  Potential to Achieve Exponential Stabilization  of the Wave Equation via Internal Control}

\author{Cr\'edo Fanou$^{1}$, \quad  Ka\"{\i}s Ammari$^{2}$,\quad Islam Boussaada$^{1,3,*}$
\thanks{*This work is part of the SPECTRE-EDP project, funded by the CNRS-AFRICA Joint Research Program.}
\thanks{$^{1}$Universit\'e Paris-Saclay, CNRS, CentraleSup\'elec, Inria, Laboratoire des Signaux et Syst\`emes (L2S), Gif-sur-Yvette, France.}%
\thanks{$^{*}$Correspondint author; e-mail: \tt{islam.boussaada@ipsa.fr}}
\thanks{$^{2}$LR Analyse et Contr\^{o}le des EDPs, LR 22ES03,  Universit\'{e} de Monastir, 5019 Monastir, Tunisie.}
\thanks{$^{3}$Institut Polytechnique des Sciences Avanc\'ees (IPSA), France.}
}

\begin{document}

\maketitle
\thispagestyle{empty}
\pagestyle{empty}


\begin{abstract}
In this work, we study the stabilization of the wave equation using an internal delayed potential. Interestingly, the stabilization mechanism is entirely induced by the delay, since exponential stabilization cannot be achieved in its absence. We first prove the well-posedness of the associated initial–boundary value problem. Then, thanks to the parametric analysis of the corresponding  quasipolynomial, we design a delayed potential feedback law which, together with appropriate initial conditions, ensures the exponential decay rate for the resulting closed-loop system. The control of the  transverse vibration of a string illustrates the effectiveness of the result.
\end{abstract}

\section{INTRODUCTION}
The internal control and stabilization of the wave equation have attracted sustained attention over the past decades, both for their theoretical significance and for their relevance to applications in structural mechanics, acoustics, and wave propagation. Foundational contributions such as \cite{BLR1992} established sharp geometric conditions ensuring observability and boundary stabilization of wave equations. Also works such as \cite{Russell} , as well as \cite{CoxZuazua}, further clarified decay mechanisms and indirect damping strategies. These results highlight how the interplay between geometry, damping mechanisms, and spectral properties governs the asymptotic behavior of wave dynamics.
While boundary damping has been extensively studied, internal control mechanisms offer greater flexibility and may better capture localized physical effects. However, it is well known—see, for instance, the delay-sensitivity analyses of Richard Datko\cite{datko1986example}—that the structure of the feedback law critically influences stability properties. In particular, an instantaneous internal potential cannot achieve exponential stabilization. The spectrum of the associated generator cannot be shifted sufficiently to the left half-plane, preventing uniform exponential decay.
This limitation naturally motivates the introduction of delayed internal potentials. Rather than being merely an obstacle, the delay may fundamentally reshape the spectral structure of the system. Properly designed delayed feedback can modify the characteristic equation in a way that enables spectral shifting otherwise impossible in the instantaneous case. In this perspective, delay becomes a constructive mechanism for stabilization rather than a source of instability.

By combining spectral analysis and quasipolynomial techniques we aim to demonstrate that delay, when properly structured, provides a powerful mechanism for exponential stabilization.

More precisely, the system under consideration  is given by:
\begin{equation}
\left\{
\begin{array}{ll}
u_{tt}(x,t)  - u_{xx}(x,t) + \mathbf{u}(x,t) =0, x\in (0,\ell),\ t>0,\\
 \text{where } \mathbf{u}(x,t)= \alpha\, u(x,t-\tau),\\
u(0,t) = 0 = u(\ell,t), t> 0, \\
u(x,0)=u_{0}(x), u_t(x,0) = u_1(x), x\in (0,\ell), \\
u(x,t-\tau) = f_0(t-\tau), x \in(0,\ell), t \in (0,\tau).
\end{array}
\right.  \label{ondes}
\end{equation}
The constant $\tau >0$ denotes the time-delay and $\ell >0$,
$\alpha,$ are real numbers such that $\alpha\neq 0$ and the initial data $u_{0},u_1$ and $f_0$ are given functions belonging to suitable spaces that will be precised later.

The contribution of the paper is twofold: first it aims to show the positive effect of the delay in internal potential stabilization. In fact, without delay no exponential stabilization can be achieved and in the presence of delay, the latter has to be appropriately chosen since a delay-independent stabilization is not possible.  Secondly, inspired from \cite{ammari2024prescribing, ammari2026multiplicity}, its objective is to determine appropriate initial conditions that guarantee the exponential stabilization of the wave equation via a delayed potential.

The remaining paper is organized as follows. In Section \ref{sec2} for the sake of self-containment, the well-posedness of the considered problem is investigated. Next, Section \ref{sec4} presents a spectral analysis  where the domain of parameters $(\tau,\alpha)$ ensuring a suitable spectrum distribution in the the complex left half-plane is established. Section \ref{sec5} provides the main results of this paper, focusing on the study of delay-independent and delay-dependent stability. Finally, to demonstrate the effectiveness and practical relevance of the proposed approach, Section \ref{simm} applies the results to the control of transverse vibrations in a string in order to illustrate the main results before concluding  in Section \ref{conc}.
\color{black}
\section{Well-posedness of problem}\label{sec2}
In this section, we first transform the delay terms by adding  new unknowns. Then, through a shifted operator approach, we use semigroup theory and the Lumer-Phillips theorem to prove the existence and uniqueness of the solution of problem \eqref{ondes}. To do so, let us introduce as in \cite{nicaise2006stability}:
\begin{equation*}\label{changev}
    z(x,\rho,t)=u(x,t-\tau\rho),\quad x\in (0,\ell),\; \rho\in (0,1),\; t>0.
\end{equation*}
Hence, problem (\ref{ondes}) is equivalent to:
\begin{equation}\label{NP}
    \begin{cases}
    u_{tt}(x,t)  - u_{xx}(x,t) +  \alpha z(x,1,t) =0,\\
    \tau z_t(x,\rho,t) + z_\rho(x,\rho,t)=0,\\
    z(x,0,t)=u(x,t),\quad x\in (0,\ell),\\
    z(x,\rho,0)= f_0(-\tau \rho) := z_0(x,\rho),\\
    u(0,t) = 0 = u(\ell,t),\\ 
    u(x,0)=u_{0}(x), u_t(x,0) = u_1(x),
    \end{cases}
\end{equation}
Now, we define the energy space: 
\begin{equation}
    \mathcal{H} := H_0^1(0,\ell)\times L^2(0,\ell)\times L^2((0,\ell)\times(0,1)),
\end{equation}
which is a Hilbert space equipped with inner product:
\begin{equation*}
    \langle V_1,V_2\rangle_\mathcal{H}=\int_0^\ell u_x^1 u_x^2dx+\int_0^\ell v^1 v^2dx+\\ \xi\int_0^\ell\int_0^1z^1z^2d\rho dx
\end{equation*}
for $V_1=(u^1,v^2,z^1),\, V_2=(u^2,v^2,z^2)$ and $\xi>0$ nonnegative real numbers to be defined later.\\
Let $\mathcal{A}$ defined as follows:
\begin{equation}\label{abst}
 \mathcal{A} \begin{pmatrix}
     u\\
     v\\
     z
 \end{pmatrix}=\begin{pmatrix}
     v\\
    u_{xx}  - \alpha z(\cdot,1) \\
    -\tau^{-1}z_\rho
\end{pmatrix}
\end{equation}
with domain $\mathcal{D}(\mathcal{A})$ defined as the set of $V: =(u,u_t,z)^T$ satisfying:
\begin{align}
 (u,v,z)^T & \in  \big(H_0^1(0,\ell)\cap H^2(0,\ell)\big)\times H_0^1(0,\ell)\times \notag \\
 & \hspace{2.7cm} L^2((0,\ell);H^1(0,1)),\label{dom1}\\
 & \hspace{2.7cm} z(\cdot,0)=u \text{ in } (0,\ell).\label{dom2}
\end{align}
Therefore, if $V_0,\, V\in \mathcal{H}$ , the problem (\ref{NP}) is formally equivalent to the following abstract evolution equation in the Hilbert space $\mathcal{H}$: 

\begin{equation}\label{EV}
    \begin{cases}
        V'(t)=\mathcal{A} V(t), & t>0 \\
        
        V(0)=V_0
    \end{cases}
\end{equation}
with $V_0 :=(u_0,u_1,z_0)$. Under the condition $ \xi=2|\alpha|\tau,$ we introduce the shifted operator $\mathcal{A}_{sh}= \mathcal{A}-\gamma I,$ where $\gamma=\frac{|\alpha|}{2}\max\, \big(1, \ell^2\big)$. Using the Lumer-Phillips theorem, it follows that $\mathcal{A}_{sh}$ generates a $C_{0}$ semigroup of contraction on $\mathcal{H}$, which is denoted by $(e^{t\mathcal{A}_{sh}})_{t \geq 0}$.\\
Then, the well-posedness of problem \eqref{NP} is ensured by:
\begin{theorem}
Let $V_0\in \mathcal{H}$, then there exists a unique solution $V\in \mathcal{C}(\mathbb{R}_{+};\mathcal{H})$ of  problem \eqref{EV}. In particular, if $V_{0}\in \mathcal{D}(\mathcal{A})$ we get:
\begin{equation*}
    V\in \mathcal{C}(\mathbb{R}_{+};\mathcal{D}(\mathcal{A}))\cap \mathcal{C}^{1}(\mathbb{R}_{+};\mathcal{H}).
\end{equation*}
\end{theorem}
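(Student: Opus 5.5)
The plan is to show that $\mathcal{A}_{sh}=\mathcal{A}-\gamma I$ is maximal dissipative on $\mathcal{H}$ (with $\xi=2|\alpha|\tau$). Lumer--Phillips then gives a $C_0$ contraction semigroup $(e^{t\mathcal{A}_{sh}})_{t\ge0}$, and $e^{t\mathcal{A}}=e^{\gamma t}e^{t\mathcal{A}_{sh}}$ is a $C_0$ semigroup generated by $\mathcal{A}$. Standard semigroup theory (Pazy) then yields the statement: $V(t)=e^{t\mathcal{A}}V_0\in\mathcal{C}(\mathbb{R}_+;\mathcal{H})$ is the unique mild solution for $V_0\in\mathcal{H}$, and for $V_0\in\mathcal{D}(\mathcal{A})$ it is a classical solution in $\mathcal{C}(\mathbb{R}_+;\mathcal{D}(\mathcal{A}))\cap\mathcal{C}^1(\mathbb{R}_+;\mathcal{H})$. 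Density of $\mathcal{D}(\mathcal{A})$ in $\mathcal{H}$ follows from maximal dissipativity in a Hilbert space, or directly since $C_c^\infty$-type triples satisfying the compatibility condition are dense.

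\emph{Dissipativity.} For $V=(u,v,z)\in\mathcal{D}(\mathcal{A})$, integrate by parts in $x$ using $v\in H_0^1$, and in $\rho$ using $z(\cdot,0)=u$:
\begin{equation*}
\langle \mathcal{A}V,V\rangle_{\mathcal{H}} = -\alpha\int_0^\ell z(x,1)v\,dx-\frac{\xi}{2\tau}\int_0^\ell \big(z(x,1)^2-u^2\big)dx .
\end{equation*}
Young's inequality gives $|\alpha z(1)v|\le \frac{|\alpha|}{2}(z(1)^2+v^2)$. With $\xi=2|\alpha|\tau$ we have $\frac{\xi}{2\tau}=|\alpha|$, so the $z(1)^2$ terms combine with coefficient $-\frac{|\alpha|}{2}\le0$ and can be dropped. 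This leaves
\begin{equation*}
\langle \mathcal{A}V,V\rangle\le \tfrac{|\alpha|}{2}\|v\|^2_{L^2}+|\alpha|\,\|u\|^2_{L^2}.
\end{equation*}
Poincaré's inequality on $(0,\ell)$, $\|u\|_{L^2}^2\le \frac{\ell^2}{\pi^2}\|u_x\|^2_{L^2}\le \frac{\ell^2}{2}\|u_x\|^2_{L^2}$, then gives $\langle\mathcal{A}V,V\rangle\le \frac{|\alpha|}{2}\max(1,\ell^2)\|V\|_{\mathcal{H}}^2=\gamma\|V\|^2_{\mathcal{H}}$. Hence $\mathcal{A}_{sh}$ is dissipative. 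The constant bookkeeping in this step is the place where the specific choices of $\xi$ and $\gamma$ matter, so I would verify it carefully.

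\emph{Maximality.} This is the main obstacle. I would show that $\lambda I-\mathcal{A}$ is onto for some $\lambda>\gamma$, which is equivalent to $I-\mathcal{A}_{sh}$ being onto after rescaling. Given $(f,g,h)\in\mathcal{H}$, solve $\lambda u-v=f$, $\lambda v-u_{xx}+\alpha z(1)=g$, $\lambda z+\tau^{-1}z_\rho=h$, $z(0)=u$. The transport equation gives explicitly
\begin{equation*}
z(x,\rho)=u(x)e^{-\lambda\tau\rho}+\tau\int_0^\rho e^{-\lambda\tau(\rho-s)}h(x,s)\,ds,
\end{equation*}
so $z(x,1)=e^{-\lambda\tau}u+z^*(x)$ with $z^*\in L^2$. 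Substituting $v=\lambda u-f$ reduces the system to the elliptic problem
\begin{equation*}
(\lambda^2+\alpha e^{-\lambda\tau})u-u_{xx}=g+\lambda f-\alpha z^*\in L^2,\qquad u\in H_0^1 .
\end{equation*}
Choose $\lambda$ large enough that $\lambda^2+\alpha e^{-\lambda\tau}>0$. The bilinear form $\int u_x w_x+(\lambda^2+\alpha e^{-\lambda\tau})uw$ is then coercive on $H_0^1$, and Lax--Milgram gives a unique weak solution $u$. Elliptic regularity, reading $u_{xx}\in L^2$ off the equation, gives $u\in H^2\cap H_0^1$. Then $v=\lambda u-f\in H_0^1$, and $z\in L^2(0,\ell;H^1(0,1))$ with $z(\cdot,0)=u$, so $V\in\mathcal{D}(\mathcal{A})$. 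Since surjectivity at one $\lambda$ above $\gamma$ together with dissipativity gives maximality, Lumer--Phillips applies and the theorem follows.
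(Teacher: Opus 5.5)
Your proposal is correct and follows the paper's route. With $\xi=2|\alpha|\tau$ you show that $\mathcal{A}_{sh}=\mathcal{A}-\gamma I$, $\gamma=\frac{|\alpha|}{2}\max(1,\ell^2)$, is maximal dissipative, apply Lumer--Phillips, and pass to $\mathcal{A}$ via $e^{t\mathcal{A}}=e^{\gamma t}e^{t\mathcal{A}_{sh}}$ and standard semigroup results. Your dissipativity estimate (which recovers exactly the paper's $\gamma$) and your transport-plus-Lax--Milgram argument for the range condition supply the details the paper only asserts.
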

\begin{proof} Since $\mathcal{A}_{sh}$ generates a $C_{0}$ semigroup of contraction on $\mathcal{H}$, following  \cite{pazy2012semigroups, engel2000one}, we deduce that $\mathcal{A}$ generates a $C_0$ semigroup $(e^{t\mathcal{A}})_{t \geq 0}$.
\end{proof}
\begin{remark}\label{rem1} Denoting by $\rho(\mathcal{A})$ and $R_{\mathcal{A}}(\lambda)$ respectively the resolvent set and  the resolvent operator  of $\mathcal{A}$, note that $R_{\mathcal{A}}(\lambda)$ is continuous and compact. Therefore, we  have:  $\sigma(\mathcal{A})=\sigma_{p}(\mathcal{A}),$  where $\sigma(\mathcal{A})$ is the spectrum of $\mathcal{A}$ and $\sigma_{p}(\mathcal{A})$ the point spectrum. 
\end{remark}

\section{ Spectral analysis}\label{sec4}

As stated in \ref{rem1}, the spectrum consists of eigenvalues of $\mathcal{A}$. We now provide the following characterization of the eigenvalues and eigenvectors of $\mathcal{A}$.
\begin{lemma}
    A complex number $\lambda\in\mathbb{C}$ is an eigenvalue of $\mathcal{A}$ if, and only if,
    \begin{equation}
     \lambda^{2} + \frac{n^2\pi^2}{\ell^2} + \alpha e^{-\lambda\tau} = 0,\quad n\in\mathbb{N}^{*}.
\end{equation}
where the corresponding eigenvector $F_{\lambda}=(u_{\lambda},v_{\lambda},z_{\lambda})$ is given by:
\begin{equation}\label{fun}
    \begin{cases}
        u_{\lambda}(x)=\sin\big(\frac{n \pi}{\ell}x\big),\, x\in (0,\ell),\\
        v_{\lambda}(x)=\lambda \sin\big(\frac{n \pi}{\ell}x\big),\, x\in (0,\ell),\\
        z_{\lambda}(x,\rho)= \sin\big(\frac{n \pi}{\ell}x\big)e^{-\lambda\tau\rho},\, \\
        \qquad \hspace{0.7cm} (x,\rho)\in(0,\ell) \times (0,1).
    \end{cases}
\end{equation}
\end{lemma}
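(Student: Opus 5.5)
We solve $\mathcal{A}F=\lambda F$ directly for $F=(u,v,z)\in\mathcal{D}(\mathcal{A})$, $F\neq 0$, and reduce it to a scalar Sturm--Liouville problem in $x$. Componentwise the eigenvalue equation reads
\begin{equation*}
v=\lambda u,\qquad u_{xx}-\alpha z(\cdot,1)=\lambda v,\qquad -\tau^{-1}z_\rho=\lambda z .
\end{equation*}
For almost every $x$, the third equation is a first-order linear ODE in $\rho$. Since $z(x,\cdot)\in H^1(0,1)$, the boundary condition $z(\cdot,0)=u$ from \eqref{dom2} makes sense and integrates to $z(x,\rho)=u(x)e^{-\lambda\tau\rho}$. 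In particular $z(x,1)=u(x)e^{-\lambda\tau}$.

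Substituting $v=\lambda u$ and this expression for $z(\cdot,1)$ into the second equation gives
\begin{equation*}
u_{xx}=\big(\lambda^2+\alpha e^{-\lambda\tau}\big)u,\qquad u\in H^2(0,\ell)\cap H^1_0(0,\ell).
\end{equation*}
The vector $F$ is nonzero only if $u\neq 0$, because $v$ and $z$ are both determined by $u$. So $\lambda$ is an eigenvalue if and only if $\mu:=-(\lambda^2+\alpha e^{-\lambda\tau})$ is an eigenvalue of the Dirichlet Laplacian $-\partial_{xx}$ on $(0,\ell)$. That operator has eigenvalues exactly $n^2\pi^2/\ell^2$, $n\in\mathbb{N}^*$, with eigenfunctions $\sin(n\pi x/\ell)$. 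The same computation also shows why $\mu$ must be one of these values: if $\mu\notin\{n^2\pi^2/\ell^2\}$, the general solution $c_1e^{\sqrt{-\mu}x}+c_2e^{-\sqrt{-\mu}x}$ (or $c_1+c_2x$ when $\mu=0$) forces $c_1=c_2=0$ once both Dirichlet conditions are imposed. Hence $\lambda$ is an eigenvalue exactly when $\lambda^2+n^2\pi^2/\ell^2+\alpha e^{-\lambda\tau}=0$ for some $n$, and then $u=\sin(n\pi x/\ell)$ up to a scalar. The formulas for $v_\lambda$ and $z_\lambda$ follow immediately, which gives \eqref{fun}.

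For the converse direction, we check that the triple \eqref{fun} lies in $\mathcal{D}(\mathcal{A})$: it is smooth, satisfies the Dirichlet conditions in $x$, and satisfies $z_\lambda(\cdot,0)=u_\lambda$. Direct substitution then shows that it satisfies $\mathcal{A}F_\lambda=\lambda F_\lambda$, using the characteristic equation.

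The only delicate point is justifying the integration in $\rho$ for functions in $L^2((0,\ell);H^1(0,1))$, which is done pointwise a.e. in $x$. This step is routine, so there is no real obstacle.
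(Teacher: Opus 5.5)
Your proof is correct. It is the natural direct computation: integrating the transport component in $\rho$ with $z(\cdot,0)=u$ gives $z=u\,e^{-\lambda\tau\rho}$, and substituting this into the second component reduces the problem to the Dirichlet eigenvalue problem $-u_{xx}=-(\lambda^2+\alpha e^{-\lambda\tau})\,u$. That is exactly what the stated eigenvector formulas encode. The paper states this lemma without writing out a proof, so your argument, including the converse check that $F_\lambda\in\mathcal{D}(\mathcal{A})$, supplies the reasoning it leaves implicit.
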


Now, we want to study how the spectrum is located relative to the imaginary axis. For this purpose, let us set 
\begin{equation}\label{QuasiGen}
    Q(\lambda,\tau)=  \lambda^{2} + \frac{n^2\pi^2}{\ell^2} + \alpha e^{-\lambda\tau}.
\end{equation}
\begin{remark}\label{RemDescate}
   Notice that when the delay is set to zero, then the characteristic function \eqref{QuasiGen} reduces to a sparse second order polynomial and it is immediate following Decartes' rule that the tuning of $\alpha$ cannot allow to a spectrum with only negative real parts.   
\end{remark} 
\noindent By changing variables: $z=\lambda\tau$, 
the quasipolynomial \eqref{QuasiGen} reduces to
\begin{equation}
    \tilde{Q}(z)=z^2+\tilde{\beta} + \tilde{\alpha}e^{-z}
\end{equation}
where $\tilde{\beta}=\tau ^2\frac{n^2\pi^2}{\ell^2},\; \tilde{\alpha}=\tau ^2\alpha.$\\
Then, the following lemma gives a necessary and sufficient condition in the parameter plane  to have these roots in the complex left half-plane.

\begin{lemma}
  \label{atay} \cite{atay1999balancing} For $n\in\mathbb{N}^{*}$ fixed.\\
    The roots of $\tilde{Q}$ are in the complex left half-plane if and only if:
    \begin{equation}\label{ine}
        0<(-1)^{k+1} \tilde{\alpha}<\min \big\{ \tilde{\beta} - k^2\pi^2,(k+1)^2\pi^2-\tilde{\beta} \big\}
    \end{equation}
    for some nonnegative integer $k$.  
\end{lemma}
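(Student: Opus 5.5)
We would prove Lemma~\ref{atay} by a D-subdivision (root-crossing) argument in the parameter plane $(\tilde\beta,\tilde\alpha)$, using that the roots of $\tilde Q$ depend continuously on the parameters and that $\tilde Q$ is a retarded quasipolynomial. For retarded type, roots cannot come in from infinity in the right half-plane. Hence the number of roots with $\Re z>0$ changes only when a root crosses the imaginary axis.

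\textbf{Step 1 (crossing boundaries).} We first find the parameters for which $\tilde Q$ has a root on $i\mathbb{R}$.
\begin{itemize}
\item For $z=0$ the condition is $\tilde\beta+\tilde\alpha=0$.
\item For $z=i\omega$ with $\omega>0$, separate real and imaginary parts: $-\omega^2+\tilde\beta+\tilde\alpha\cos\omega=0$ and $\tilde\alpha\sin\omega=0$. Since $\tilde\alpha\neq0$, this forces $\omega=m\pi$ with $m\ge1$, and then $\tilde\alpha=(-1)^{m+1}(\tilde\beta-m^2\pi^2)$.
\end{itemize}
So the boundary curves are the lines $L_m:\ \tilde\alpha=(-1)^{m+1}(\tilde\beta-m^2\pi^2)$ for $m\ge0$, where $L_0$ is $\tilde\alpha=-\tilde\beta$. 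These lines cut the half-plane $\tilde\beta>0$ into open regions, and the number of unstable roots is constant on each region.

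\textbf{Step 2 (direction of crossing).} At a simple root $i m\pi$ on $L_m$, we compute the sign of $\Re\,\partial z/\partial\tilde\alpha$ (or $\partial z/\partial\tilde\beta$) by implicit differentiation. The derivative is $\partial_z\tilde Q=2z-\tilde\alpha e^{-z}$. At $z=im\pi$ we have $e^{-z}=(-1)^m$, so $\partial z/\partial\tilde\alpha=-(-1)^m/(2im\pi-(-1)^m\tilde\alpha)$. The sign of its real part tells us on which side of $L_m$ a pair of roots enters the right half-plane.

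\textbf{Step 3 (counting from a reference point).} At $\tilde\alpha=0$ the roots are $\pm i\sqrt{\tilde\beta}$ together with no other finite roots. Perturbing by small $\tilde\alpha$, we determine whether this pair moves left or right. The derivative gives $\Re\,\partial z/\partial\tilde\alpha$ with the sign of $\pm\sin\sqrt{\tilde\beta}$, so the pair moves left exactly when $(-1)^{k+1}\tilde\alpha>0$ for $\sqrt{\tilde\beta}\in(k\pi,(k+1)\pi)$. For small $|\tilde\alpha|$, the infinitely many new roots coming from $e^{-z}$ lie far in the left half-plane, with $\Re z\sim\log|\tilde\alpha|$, so they are stable.

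\textbf{Step 4 (identifying the stable regions).} We then identify the connected region containing these stable points. It is the diamond bounded by $L_k$ and $L_{k+1}$ on the correct side of $\tilde\alpha=0$. Writing its inequalities explicitly gives exactly \eqref{ine}. For the converse, using Step 2 we show that leaving the diamond across either $L_k$ or $L_{k+1}$ makes roots cross into $\Re z>0$. Any other region is reached only through such crossings, and the crossing directions show the count of unstable roots never returns to zero.

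The main obstacle is this global bookkeeping: verifying that crossings never cancel, so that no stable region exists outside the diamonds. We would handle it by checking monotonicity of the crossing directions along each $L_m$.

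Since the statement is cited from \cite{atay1999balancing}, in the paper we would only sketch Steps 1 and 3 and refer to that work for the complete counting argument.
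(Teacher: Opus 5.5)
Your argument is correct, and it uses the same D-subdivision framework as the paper: regions of the $(\tilde\beta,\tilde\alpha)$-plane on which the number of roots in $\Re z>0$ is constant. The difference is in how you count the unstable roots.

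The paper never tracks crossings. It assigns the count to each region directly by the St\'ep\'an--Hassard formula, i.e.\ from the signs of $\Im\tilde Q(i\omega)$ at the positive zeros of $\Re\tilde Q(i\omega)$ at a representative point, and reads the stable triangles off Fig.~\ref{fig:StabilityDomain}. This avoids any global bookkeeping, but treats the formula as a black box. You instead propagate the count from the degenerate line $\tilde\alpha=0$ using crossing directions. That is elementary and self-contained, at the price of the singular perturbation at $\tilde\alpha=0$ (which you handle correctly) and the bookkeeping you flag as the main obstacle.

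That obstacle is already resolved by your Step~2. For every $m\ge0$ you have $\Re\,\partial z/\partial\tilde\alpha=\tilde\alpha/(4m^2\pi^2+\tilde\alpha^2)$, and every imaginary-axis root is simple when $\tilde\alpha\neq0$. Hence on each vertical half-line $\tilde\beta=\text{const}$, moving away from $\tilde\alpha=0$, roots only cross from left to right, and the unstable count is nondecreasing in $|\tilde\alpha|$. By Step~3 its limit as $\tilde\alpha\to0$ is $0$ on the side $(-1)^{k+1}\tilde\alpha>0$ and $2$ on the other side. On the stable side it stays $0$ exactly until the first line $L_m$ is reached, which happens at $(-1)^{k+1}\tilde\alpha=\min\{\tilde\beta-k^2\pi^2,(k+1)^2\pi^2-\tilde\beta\}$. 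This gives both implications of \eqref{ine} at once.

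Two minor corrections:
\begin{itemize}
\item The stable region is the triangle bounded by $\tilde\alpha=0$, $L_k$ and $L_{k+1}$, not a diamond. Relatedly, the line $\tilde\alpha=0$ must be counted among the region boundaries.
\item At $\tilde\beta=k^2\pi^2$ your first-order test degenerates, since $\sin\sqrt{\tilde\beta}=0$. There, a second-order expansion gives $\Re z=\tilde\alpha^2/(4k^2\pi^2)+O(\tilde\alpha^3)>0$. Alternatively, such a point lies in the same open region as nearby points with $\tilde\beta\neq k^2\pi^2$. Either way these points are unstable, consistent with \eqref{ine} failing there.
\end{itemize}
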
 

Lemma \ref{atay} is proved by carrying out an analysis based on the parameters $(\tilde{\beta}, \tilde{\alpha})$ that will consist of using the continuity of the roots as function of parameters. Hence, the parameters plane $(\tilde{\beta}, \tilde{\alpha})$ can be partitioned into regions inside which the number of roots in the right half-plane remains constant \cite{michiels2014stability} and this number is known thinks to Stepan-Hassard \cite{Stepan1989Retarded, huang1985characteristic} formula as illustrate in Fig \ref{fig:StabilityDomain}. number of roots in the right half-plane in relation to the parameter plane.
\begin{figure}[h]
    \centering
    {\includegraphics[trim={10mm 27mm 22mm 4mm},clip,width=1\linewidth]{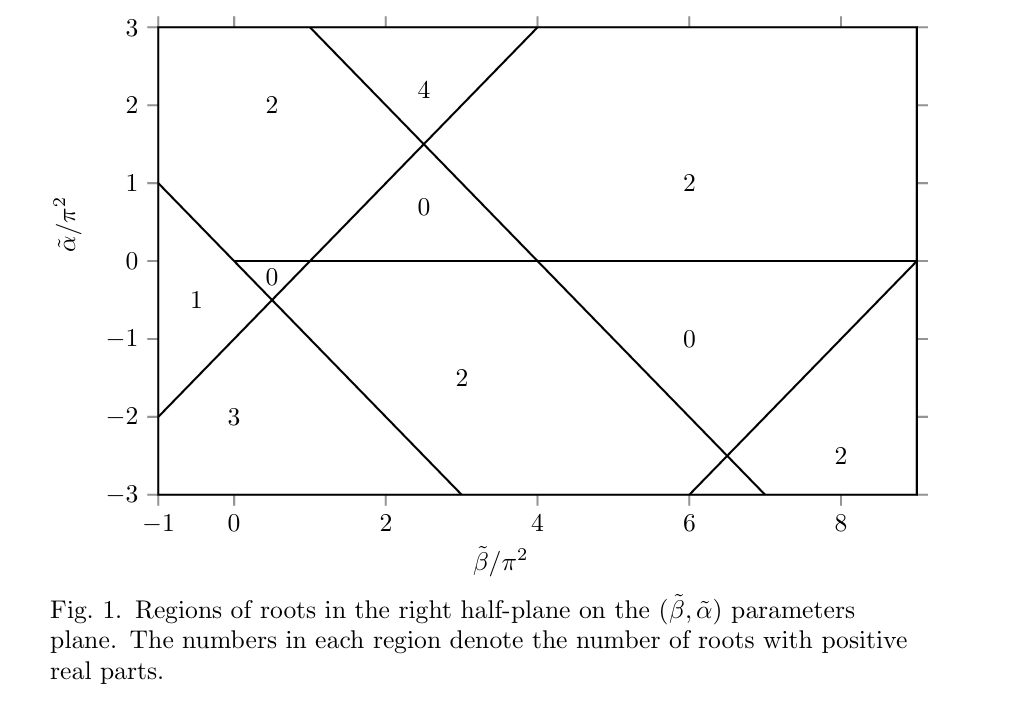}} \vskip-0.5\baselineskip
    \caption{Regions of roots in the right half plane on the $\left(\tilde{\beta},\tilde{\alpha}\right)$ parameters plane. The numbers in each region denote the number of roots with positive real parts.\label{fig:StabilityDomain}}
\end{figure}
\begin{remark}\label{depen}
 Using the change of variable on \eqref{QuasiGen}, condition \eqref{ine} on $\alpha$ and $\tau$
implies that, for $n\in\mathbb{N}^*$,
\begin{multline}\label{line}
  0<(-1)^{k+1}\ell^2\tau^2 \alpha<\\
  \min \big\{ n^2\pi^2\tau^2 - k^2\ell^2\pi^2,\,(k+1)^2\ell^2\pi^2-n^2\pi^2\tau^2 \big\}  
\end{multline}
for some nonnegative integer $k$.   
\end{remark}
\begin{remark}
    It is worth noting that condition \eqref{line} can be interpreted as follows.
    If $\alpha$ is negative (respectively positive) then $k$ has to be odd (respectively even), this is obtained from first inequality.
     Now from the second inequality, the minimum $ \min \big\{ n^2\pi^2\tau^2 - k^2\ell^2\pi^2,\,(k+1)^2\ell^2\pi^2-n^2\pi^2\tau^2 \big\} $ has to be positive, so both quantities need to be positive.

     Furthermore, straightforward algebraic  manipulations allow for the following simpler inequality $$\frac{n\tau}{\ell}-1<k<\frac{n\tau}{\ell}$$. So that, if it exists,  $k$ needs to be exactly the integer part of $\frac{n\tau}{\ell}$. In particular, if $\frac{n\tau}{\ell}\in\mathbb{N}^*$, then the existence of such a $k$ is not possible, meaning that some of zeros of $Q$ are necessarily with non negative real parts.
\end{remark}

\section{Main results}\label{sec5}
In this section we first provide first a delay-independent stability analysis, then the investigation of the delay-dependent stabilization is considered.

\subsection{Delay-independent stability cannot occur}\label{DI}
Consider the family of quasipolynomials $(Q_n)_{n\in\mathbb{N}^*}$ depending on the parameters $(\ell,\alpha,\tau) \in  \mathbb{R}^{*}_{+}\times\mathbb{R}^{*}\times\mathbb{R}_{+}$:
\begin{equation}
    Q_n(s;\ell,\alpha,\tau)=s^2+\frac{n^2\pi^2}{\ell^2}+\alpha e^{-s\tau}
\end{equation}
The delay-independent stability property is characterized by uniform stability independently of  delay with the absence of crossing frequencies $\omega\in\mathbb{R}$, that is, $Q_n(i\omega)\neq0$ (see \cite{michiels2014stability} for more details).
\begin{theorem}\label{independ}
    Considering the solution of problem \eqref{ondes}, uniform stability independent of the delay cannot be achieved.
\end{theorem}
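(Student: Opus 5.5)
Delay-independent stability of the family $(Q_n)_{n\in\mathbb{N}^*}$ would mean that, for fixed $(\ell,\alpha)$, every root of every $Q_n(\cdot;\ell,\alpha,\tau)$ lies in the open left half-plane for \emph{all} $\tau\ge 0$. The plan is to show that this requirement already fails at one of two places. Either it fails at $\tau=0$, or some $Q_n$ has a purely imaginary root $i\omega$ for some $\tau>0$, which rules out the absence of crossing frequencies demanded by the characterization recalled before the statement.

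\textbf{Step 1 (the value $\tau=0$).} At $\tau=0$ we have $Q_n(s)=s^2+\frac{n^2\pi^2}{\ell^2}+\alpha$, a polynomial with no first-order term. Its roots are $s=\pm\sqrt{-(n^2\pi^2/\ell^2+\alpha)}$. They are either real and symmetric about the origin, so one is nonnegative, or purely imaginary; this is the content of Remark~\ref{RemDescate}. Hence for every $\alpha\neq0$ and every $n$, the spectrum at $\tau=0$ is never contained in the open left half-plane. So stability cannot hold uniformly on $\tau\in[0,\infty)$. This alone already contradicts delay-independent stability, since that notion includes $\tau=0$.

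\textbf{Step 2 (a crossing frequency always exists).} Step 2 makes the argument robust even if one restricts to $\tau>0$. Substitute $s=i\omega$ into $Q_n$ and separate real and imaginary parts:
\[
\frac{n^2\pi^2}{\ell^2}-\omega^2+\alpha\cos(\omega\tau)=0,\qquad \alpha\sin(\omega\tau)=0 .
\]
Since $\alpha\neq0$, the second equation gives $\omega\tau=m\pi$, $m\in\mathbb{Z}$, so $\cos(\omega\tau)=(-1)^m$. The first equation then becomes $\omega^2=\frac{n^2\pi^2}{\ell^2}+(-1)^m\alpha$. Fix $n$ large enough that $\frac{n^2\pi^2}{\ell^2}>|\alpha|$. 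Then both values $\omega_\pm^2=\frac{n^2\pi^2}{\ell^2}\pm\alpha$ are positive. Choosing the sign compatible with the parity of $m$ and setting $\tau=m\pi/\omega$ produces infinitely many delays $\tau>0$ at which $Q_n(i\omega)=0$. Thus crossing frequencies exist for every $(\ell,\alpha)$, and delay-independent stability in the sense of \cite{michiels2014stability} is impossible.

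\textbf{Step 3 (consistency check).} As an independent check, I would connect this to Remark~\ref{depen} and the following remark. For fixed $\alpha,\ell$, stability requires $k=\lfloor n\tau/\ell\rfloor$ to have the parity prescribed by the sign of $\alpha$, with $n\tau/\ell\notin\mathbb{N}^*$. Letting $\tau$ vary with $n=1$, the quantity $\tau/\ell$ passes through integers, and there Lemma~\ref{atay} forbids stability.

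The only delicate point I expect is Step 2: the root $i\omega$ must be a genuine crossing, not an artefact. The degenerate case $\omega=0$ would require $\frac{n^2\pi^2}{\ell^2}+(-1)^m\alpha=0$. This is avoided by taking $n$ large, which guarantees $\omega\neq0$ and that $m\ge1$ gives $\tau>0$.
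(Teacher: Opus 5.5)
Your proposal is correct and follows essentially the paper's own argument: the spectrum at $\tau=0$ is never in the open left half-plane (the sparse polynomial of Remark~\ref{RemDescate}), and $Q_n$ has purely imaginary roots $\pm i\omega$ at suitable positive delays. The only difference is computational. The paper eliminates $\tau$ via the modulus equation $\omega^4-\frac{2n^2\pi^2}{\ell^2}\omega^2-\big(\alpha^2-\frac{n^4\pi^4}{\ell^4}\big)=0$, whereas your use of $\sin(\omega\tau)=0$ yields the same frequencies $\omega^2=\frac{n^2\pi^2}{\ell^2}\pm|\alpha|$ together with explicit critical delays $\tau=m\pi/\omega$; since $\frac{n^2\pi^2}{\ell^2}+|\alpha|>0$ always, your restriction to large $n$ is not actually needed.
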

\begin{remark}
 In particular, stabilization cannot be ensured through an instantaneous potential of the form $\mathbf{u}(x,t)=\alpha u(x,t).$  
\end{remark}

\begin{proof}[Proof of Theorem \ref{independ}]. It suffices to show the existence of crossing frequencies corresponding to critical delays, which reduces to studying $Q_n(i\omega)=0$. This leads to
\begin{equation}
    \omega^4-\frac{2n^2\pi^2}{\ell^2}\omega^2 - \big(\alpha^2-\frac{n^4\pi^4}{\ell^4}\big)=0.
\end{equation}
 The following configurations are then considered.
 \begin{enumerate}
\item [(a)]  If $\alpha^2\geq\frac{n^4\pi^4}{\ell^4}$, $Q_n$ admits a unique  pair of complex conjugate root $\pm i\omega_+$ on the imaginary axis with $\tau_+$
the critical delay associated.
\item[(b)] If $\alpha^2<\frac{n^4\pi^4}{\ell^4}$, $Q_n$  admits  two pair of complex conjugate roots on the imaginary axis that we denote $\pm i\omega_+$ and $\pm i\omega_-$ such as $0<\omega_-<\omega_+$. The critical delays associated are respectively denote as $\tau_-$ and $\tau_+$.
\end{enumerate}
As a result, there always exists at least one pair of roots on the imaginary axis. Moreover, as discussed in Remark \ref{RemDescate} and  confirmed by Remark \ref{depen} when the delay is set to zero, the applied control cannot stabilize.
\end{proof}

The delay therefore acts as an important control parameter in the stabilization process.

\subsection{delay-dependent stabilization}
In this part, the control parameters $\alpha$ and $\tau$ are considered under condition \eqref{line}.
Then the eigenvalues of $\mathcal{A}$ is localized in the complex left half-plane: 
\begin{align}
   \sigma(\mathcal{A}) = & \underset{n\geq 1}{\cup}  \sigma_{n}(\mathcal{A}) \notag \\
    =& \underset{n\geq 1}{\cup} \big\{ \lambda \in \mathbb{C}_-\, :\, \lambda^{2} + \frac{n^2\pi^2}{\ell^2} + \alpha e^{-\lambda\tau} = 0 \big\}.
\end{align}

The next theorem  provides necessary and sufficient conditions that guaranty the  exponential decay of the closed-loop system's \eqref{ondes} solution  

\begin{theorem}\label{dec}
    Under the initial conditions
    \begin{equation}\label{ini}
    u_0(x)=\zeta_0\sin\big(\frac{n \pi}{\ell}x\big),\quad u_1(x)=\zeta_1\sin\big(\frac{n \pi}{\ell}x\big)
    \end{equation}
    with $(\zeta_0,\zeta_1,n)\in\mathbb{R}\times\mathbb{R}\times\mathbb{N}^{*}$ and for $(\tau,\alpha)\in\mathbb{R}_{+}^{*}\times\mathbb{R}^{*},$ satisfying \eqref{line}, the solution of \eqref{ondes} decays exponentially. 
\end{theorem}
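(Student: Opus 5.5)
The plan is a separation-of-variables reduction: with initial data \eqref{ini} concentrated on a single mode $\sin(n\pi x/\ell)$, the solution stays in that mode, and the problem becomes a scalar delay differential equation whose characteristic function is exactly $Q_n$ from \eqref{QuasiGen}. Throughout I assume the history $f_0$ has the same spatial profile, i.e. $f_0(s)=\phi(s)\sin(n\pi x/\ell)$ with $\phi\in C([-\tau,0])$ (or $L^2$) and $\phi(0)=\zeta_0$. This is implicit in the statement, since otherwise other modes are excited.

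\textbf{Step 1: reduction to one mode.} I look for the solution in the form $u(x,t)=y(t)\sin(n\pi x/\ell)$. Substituting into \eqref{ondes} gives the scalar retarded equation
\[
y''(t)+\frac{n^2\pi^2}{\ell^2}\,y(t)+\alpha\, y(t-\tau)=0,\qquad y(0)=\zeta_0,\quad y'(0)=\zeta_1,
\]
with history $y=\phi$ on $[-\tau,0)$. This equation has a unique solution by the method of steps. The associated $u$ lies in the space of the well-posedness theorem, so by uniqueness of the semigroup solution of \eqref{EV} it is \emph{the} solution. Equivalently, the closed subspace $\{(a\sin(n\pi x/\ell),\,b\sin(n\pi x/\ell),\,c(\rho)\sin(n\pi x/\ell))\}$ is invariant under $e^{t\mathcal{A}}$.

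\textbf{Step 2: spectrum of the scalar equation.} The characteristic function of the scalar equation is $Q_n(\lambda)=\lambda^2+n^2\pi^2/\ell^2+\alpha e^{-\lambda\tau}$. Under \eqref{line}, Remark \ref{depen} together with Lemma \ref{atay} (after the scaling $z=\lambda\tau$) puts all roots of $Q_n$ in the open left half-plane. I then need a uniform margin, $\sup\{\operatorname{Re}\lambda : Q_n(\lambda)=0\}=-\delta<0$. For a retarded quasipolynomial this follows because:
\begin{itemize}
\item the roots in any right half-plane $\operatorname{Re}\lambda\ge -c$ are finite in number, since $|\lambda|^2\le n^2\pi^2/\ell^2+|\alpha|e^{c\tau}$ bounds them;
\item so the supremum is attained and is negative.
\end{itemize}

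\textbf{Step 3: exponential decay.} By the standard theory of linear retarded functional differential equations (Hale--Verduyn Lunel), the spectral bound of the solution semigroup equals the maximal real part of the characteristic roots. For retarded equations the semigroup is eventually compact, so the growth bound equals the spectral bound. Hence for any $0<\varepsilon<\delta$ there is $M$ with
\[
|y(t)|+|y'(t)|\le M e^{-(\delta-\varepsilon)t}\big(|\zeta_0|+|\zeta_1|+\|\phi\|\big).
\]
Alternatively, one can Laplace transform, write $\hat y(\lambda)=N(\lambda)/Q_n(\lambda)$ with $N$ entire and built from the data, and shift the inverse Laplace contour to $\operatorname{Re}\lambda=-\delta+\varepsilon$. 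Multiplying by $\sin(n\pi x/\ell)$, this bound transfers to the energy norm in $\mathcal{H}$, including the $z$-component, because $z(x,\rho,t)=y(t-\tau\rho)\sin(n\pi x/\ell)$.

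\textbf{Main obstacle.} The delicate point is Step 3's passage from ``all roots in $\mathbb{C}_-$'' to a genuine exponential bound. If I take the Laplace route, this requires an estimate of $1/Q_n$ on vertical lines, which I would obtain from $|Q_n(\lambda)|\ge|\lambda|^2-C$ for large $|\operatorname{Im}\lambda|$. The secondary subtlety is the compatibility assumption on $f_0$ noted at the start, which I will state explicitly.
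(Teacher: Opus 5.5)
Your proposal is correct and follows the same route as the paper. The paper's proof simply observes that the data \eqref{ini} make $Q_n$ the only relevant quasipolynomial, then invokes Lemma~\ref{atay} via Remark~\ref{depen}. What you add are the steps the paper leaves implicit: the assumption that the history $f_0$ lies in the same mode $\sin(n\pi x/\ell)$ (without it, other modes governed by $Q_m$, which may be unstable, would be excited); the uniform spectral margin; and the passage from root location to an actual exponential bound, using growth bound $=$ spectral bound for retarded equations.
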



\begin{remark}
The quasimode-type initial conditions \eqref{ini} allow the selection of a specific mode and the projection of the solution of \eqref{ondes} onto it prior to the application of the stabilization mechanism. As a result, the stabilization strongly depends on the prescribed initial data. This feature is particularly relevant in engineering applications, where suitably designed initial data may be used to achieve stabilization around the zero solution.
\end{remark}
\begin{proof}[Proof of Theorem \ref{dec}]
With the initial conditions (\ref{ini}), the only relevant quasipolynomial is:
\begin{equation*}
 Q_n(\lambda,\tau)= \lambda^{2} + \frac{n^2\pi^2}{\ell^2} + \alpha e^{-\lambda\tau}.  
\end{equation*}
 Then, by taking $\tau$ and $\alpha$ satisfying condition \eqref{line} from Remark \ref{depen}, the results follows directly from Lemma \ref{atay}. 
\end{proof}

The above analysis shows that, when using a potential-type control, the delay, and more precisely its appropriate choice, is necessary for stabilization.

\section{Simulations}\label{simm}
We present simulation results applying an internal delayed potential to the transverse vibration of a string of length $l>0$ whose dynamics are modeled by the following equation
\begin{equation}\label{ondesnum}
   \begin{cases}
        U_{\mathbf{tt}}(\mathbf{x,t})-c^2U_{\mathbf{xx}}(\mathbf{x,t})=-V(\mathbf{x,t}),\,\\ 
        \qquad \hspace{1cm} (\mathbf{x,t})\in(0,l)\times(0,+\infty),\\
        U(0,t)=0,\quad U(l,t)=0, \qquad t>0,\\
        U(\mathbf{x},0)=f(\mathbf{x}),\quad  U_{\mathbf{t}}(\mathbf{x,0})=g(\mathbf{x}), \, \mathbf{x}\in(0,l),
   \end{cases}
\end{equation}
where
\begin{equation*}
  U(\mathbf{x,t}) := u(x,t),\; V(\mathbf{x,t)}:=\begin{cases} \label{contnum}
     \frac{\alpha}{d^2}  U(\mathbf{x,t}-\bm{\tau}),\\ (\mathbf{x,t})\in(0,l)\times(0,+\infty),\\
      \\
  0, \; \mathbf{t}\in(0,\bm{\tau}),
\end{cases}
\end{equation*}
with
\begin{equation*}
     \mathbf{x}=l\frac{x}{\ell}, \quad \mathbf{t}=d \, t\;\; \Big(d:=\frac{l}{c\,\ell}\Big),\quad\bm{\tau}=d\, \tau,
\end{equation*}
and $\tau,\; \alpha$ satisfying condition in Theorem \ref{dec}.\\
\indent  The problem is discretized using the finite difference method. Now to show the efficiency of the control considered, we  chose the following initial conditions
\begin{equation}\label{mdes}
    f(x)=A\sin\Big(\frac{n\pi}{l}x\Big),\quad g(x)=0,
\end{equation}
Let us denote by $\Delta x$, $\Delta t$ and $T_f$ respectively the space step, the time step and the final simulation time such that: $\Delta x=0.05,\, \Delta t=0.005,\, T_f=100$  for $l=10\, ,\, c=1.118 \,,\,d=8.9443\, \,,\, A=1\,, \, n=1.$ Fig (A) illustrates the behavior of the solution without control.
\begin{figure}[h]
    \centering
    \includegraphics[width=0.985\linewidth]{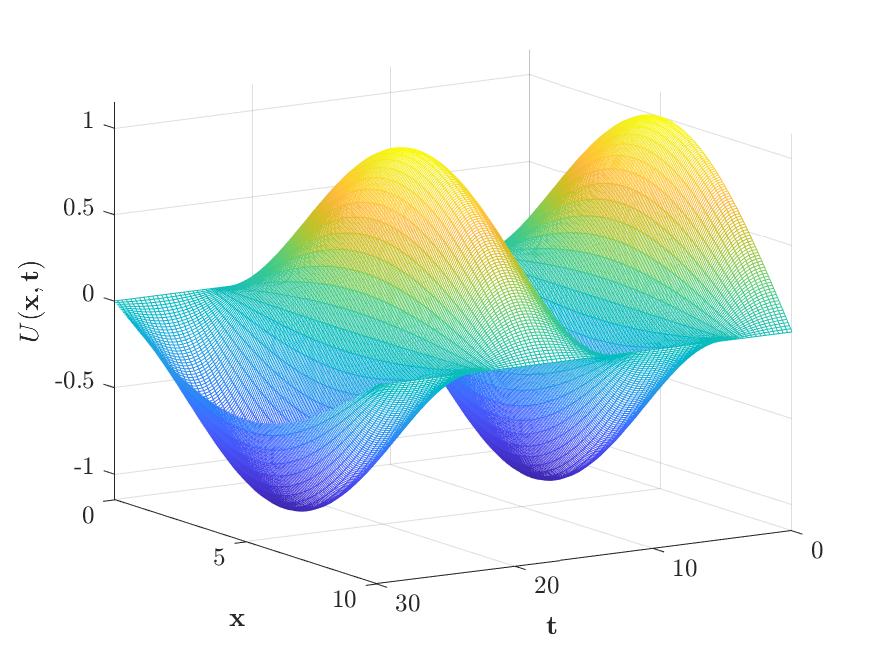}\caption{Without control. \label{fig:WithoutControl}}
\end{figure}

Fig \ref{fig:ClosedLoopCase1}, Fig \ref{fig:ClosedLoopCase2} and Fig \ref{fig:ClosedLoopCase3} illustrate the behavior of the closed-loop system under the control obtained by choosing a set of parameters $(\tau,\alpha)$ satisfying condition \eqref{line}, in the following cases: case 1 correspond to $\tau=\frac{3}{2}$,  $\alpha=5$ then $k=1$; case 2 correspond to $\tau=\frac{3}{2}$,  $\alpha=3$ then $k=1$ and case 2 correspond to $\tau=\frac{5}{2}$,  $\alpha=-1.7766$ with $k=2$.  
\begin{figure}[h]
    \centering
    \includegraphics[width=0.985\linewidth]{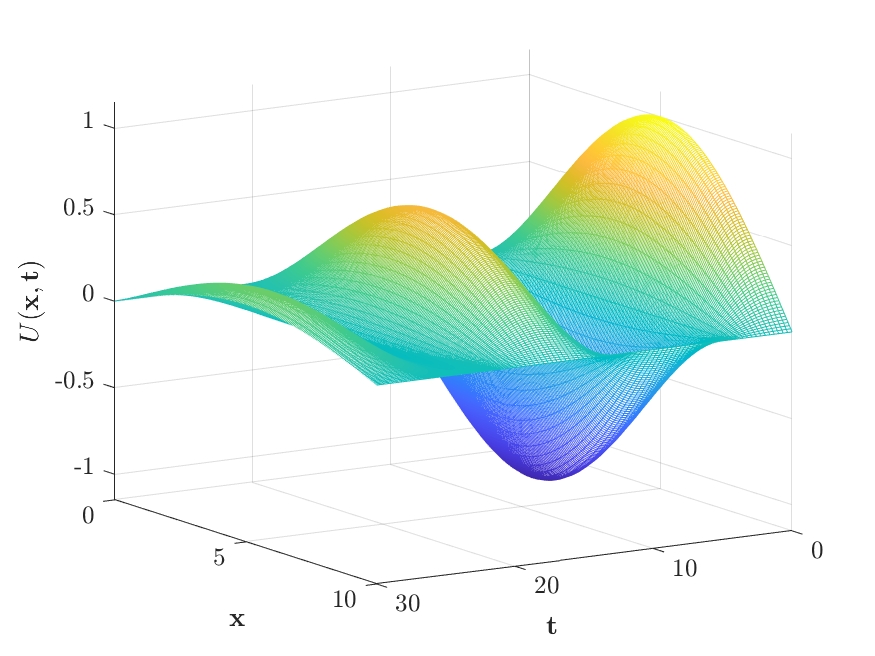}\caption{Closed-loop: case 1 ($\tau=\frac{3}{2}$,  $\alpha=5$, $k=1$). \label{fig:ClosedLoopCase1}}
\end{figure}
 
\begin{figure}[h]
    \centering
    \includegraphics[width=0.985\linewidth]{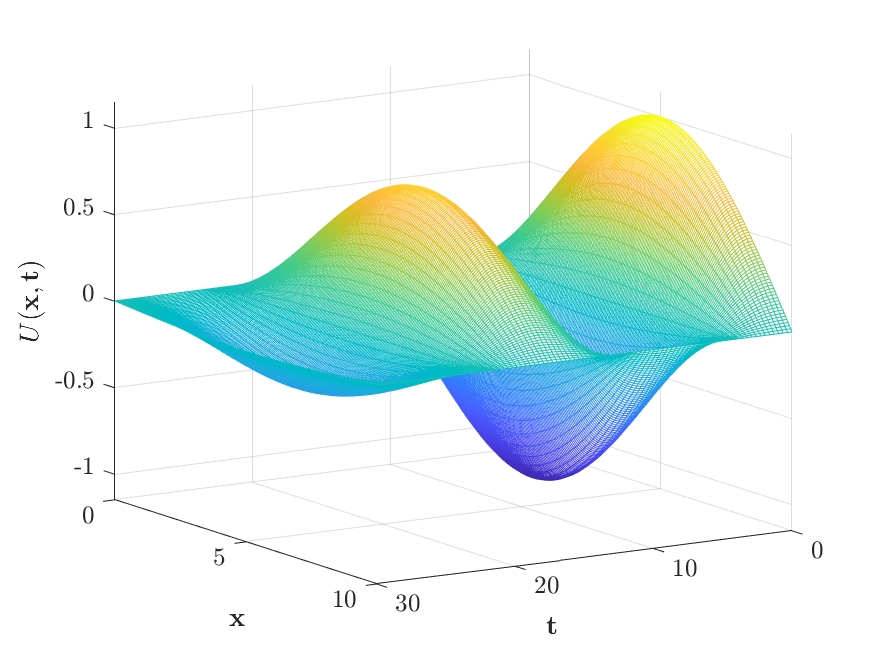}\caption{Closed-loop : case 2 ($\tau=\frac{3}{2}$, $\alpha=3$, $k=1$). \label{fig:ClosedLoopCase2}}
\end{figure}

\begin{figure}[h!]
    \centering
    \includegraphics[width=0.985\linewidth]{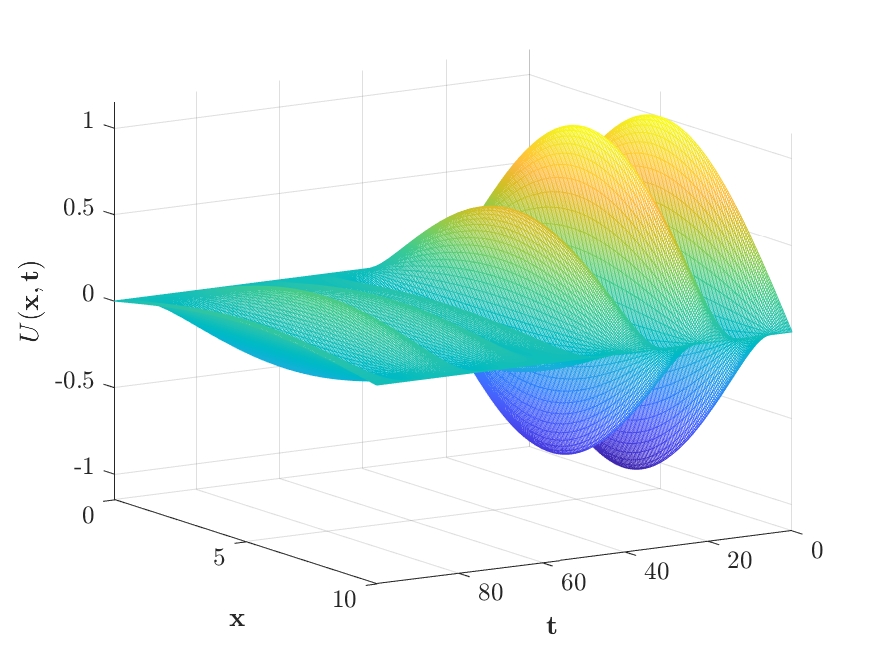}\caption{
     Closed-loop: case 3 ($\tau=\frac{5}{2}$, $\alpha=-1.7766$,   $k=2$).\label{fig:ClosedLoopCase3}}
\end{figure}
We observe that the decay is fastest when the parameters $(\tau,\alpha)$ correspond to Case 2, followed by Case 3 and finally Case 1. This is confirmed by Fig \ref{fig:placeholder}, which shows a numerical representation of the spectrum in each of these three cases.
\begin{figure}[h!]
    \centering
    {\includegraphics[width=0.95\linewidth]{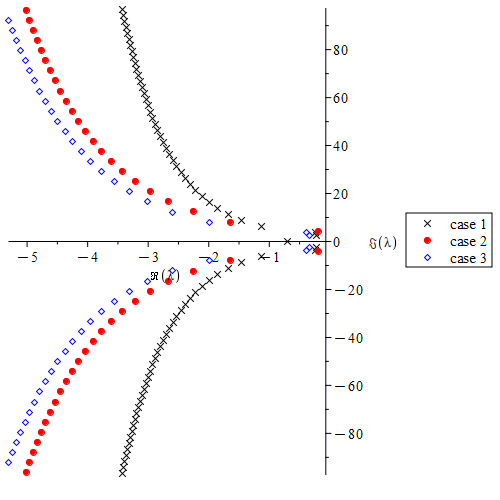}}
    \caption{Closed-loop spectrum for each case.}
    \label{fig:placeholder}
\end{figure}

\section{Concluding Remarks}\label{conc}
In conclusion, we have investigated the stabilization of the wave equation through an internal delayed potential. A distinctive feature of this approach is that the stabilization mechanism is entirely induced by the delay, since exponential stabilization cannot be achieved in its absence. After establishing the well-posedness of the associated initial–boundary value problem, we performed a parametric analysis of the corresponding quasipolynomial. This analysis enabled the design of a delayed potential feedback law which, combined with suitable initial conditions, guarantees an exponential decay  for the resulting closed-loop system.

\medskip
\section*{ACKNOWLEDGMENT}
We warmly thank our colleagues {\sc Silviu Niculescu} (L2S, University Paris-
Saclay) for valuable discussions on zeros distribution of quasipolynomials and {\sc Sami
Tliba} (L2S, CNRS) for his great help in developing the finite difference method numerical simulations. Last but not least, we thank {\sc Karim Trabelsi} (IPSA Paris)
for careful reading of the manuscript and for comments.


\bibliographystyle{plain}
\bibliography{biblio.bib}

\begin{thebibliography}{10}

\bibitem{ammari2024prescribing}
Ka{\"\i}s Ammari, Islam Boussaada, Silviu-iulian Niculescu, and Sami Tliba.
\newblock Prescribing transport equation solution's decay via multiplicity
  manifold and autoregressive boundary control.
\newblock {\em International Journal of Robust and Nonlinear Control},
  34(10):6721--6740, 2024.

\bibitem{ammari2026multiplicity}
Ka{\"\i}s Ammari, Islam Boussaada, Silviu-Iulian Niculescu, and Sami Tliba.
\newblock Multiplicity manifolds as an opening to prescribe exponential decay:
  auto-regressive boundary feedback in wave equation stabilization.
\newblock {\em Acta Applicandae Mathematicae}, 201(1):3, 2026.

\bibitem{atay1999balancing}
Fatihcan~M Atay.
\newblock Balancing the inverted pendulum using position feedback.
\newblock {\em Applied Mathematics Letters}, 12(5):51--56, 1999.

\bibitem{BLR1992}
Claude Bardos, Gilles Lebeau, and Jeffrey Rauch.
\newblock Sharp sufficient conditions for the observation, control, and
  stabilization of waves from the boundary.
\newblock {\em SIAM Journal on Control and Optimization}, 30(5):1024--1065,
  1992.

\bibitem{CoxZuazua}
Steven Cox and Enrique Zuazua.
\newblock The rate at which energy decays in a damped string.
\newblock {\em Communications in Partial Differential Equations},
  19(1-2):213--243, 1994.

\bibitem{datko1986example}
Richard Datko, John Lagnese, and MP818942 Polis.
\newblock An example on the effect of time delays in boundary feedback
  stabilization of wave equations.
\newblock {\em SIAM journal on control and optimization}, 24(1):152--156, 1986.

\bibitem{engel2000one}
Klaus-Jochen Engel and Rainer Nagel.
\newblock {\em One-parameter semigroups for linear evolution equations}.
\newblock Springer, 2000.

\bibitem{huang1985characteristic}
Falun Huang.
\newblock Characteristic conditions for exponential stability of linear
  dynamical systems in hilbert spaces.
\newblock {\em Ann. of Diff. Eqs.}, 1:43--56, 1985.

\bibitem{michiels2014stability}
Wim Michiels and Silviu-Iulian Niculescu.
\newblock {\em Stability, control, and computation for time-delay systems: an
  eigenvalue-based approach}.
\newblock SIAM, 2014.

\bibitem{nicaise2006stability}
Serge Nicaise and Cristina Pignotti.
\newblock Stability and instability results of the wave equation with a delay
  term in the boundary or internal feedbacks.
\newblock {\em SIAM Journal on Control and Optimization}, 45(5):1561--1585,
  2006.

\bibitem{pazy2012semigroups}
Amnon Pazy.
\newblock {\em Semigroups of linear operators and applications to partial
  differential equations}.
\newblock Springer Science \& Business Media, 2012.

\bibitem{Russell}
David~L. Russell.
\newblock Controllability and stabilizability theory for linear partial
  differential equations: Recent progress and open questions.
\newblock {\em SIAM Review}, 20(4):639--739, 1978.

\bibitem{Stepan1989Retarded}
G\'abor St\'{e}p\'{a}n.
\newblock {\em Retarded dynamical systems: stability and characteristic
  functions}, volume 210 of {\em Pitman Research Notes in Mathematics Series}.
\newblock Longman Scientific \& Technical, Harlow, 1989.

\end{thebibliography}

\end{document}